\newcommand{\linfty}{\ensuremath{\ell^\infty}}
\newcommand{\Linfty}{\ensuremath{L^\infty}}
\newcommand{\cat}[1]{\ensuremath{\mathbf{#1}}}
\newcommand{\Cstar}{\cat{Cstar}}
\newcommand{\AWstar}{\cat{AWstar}}
\newcommand{\Z}{\mathbb{Z}}
\newcommand{\C}{\mathbb{C}}
\newcommand{\T}{\mathbb{T}}
\newcommand{\N}{\mathbb{N}}
\newcommand{\inprod}[2]{\ensuremath{\langle #1 , #2 \rangle}}
\newcommand{\eps}{\varepsilon}
\theoremstyle{plain}
\newtheorem*{theorem*}{Theorem}
\newtheorem*{corollary*}{Corollary}
\newtheorem{lemma}{Lemma}
\theoremstyle{definition}
\newtheorem*{definition*}{Definition}
\newtheorem*{remark*}{Remark}
\newtheorem*{notation*}{Notation}
\begin{document}
\title{On discretization of C*-algebras}
\author{Chris Heunen}
\address{Department of Computer Science, University of Oxford, Oxford OX1 3QD, UK}
\email{heunen@cs.ox.ac.uk}
\author{Manuel L. Reyes}
\address{Department of Mathematics, Bowdoin College\\
Brunswick, ME 04011--8486, USA}
\email{reyes@bowdoin.edu}
\thanks{C.\ Heunen was supported by EPSRC Fellowship EP/L002388/1. \\\indent M.\, L.\ Reyes was supported by NSF grant DMS-1407152}
\date{March 25, 2015}
\subjclass[2010]{
46L30, %States
46L85, %Noncommutative topology
46M15%Categories, functors
}
\keywords{noncommutative topology, discrete space, pure state}
\maketitle
\begin{abstract}
  The C*-algebra of bounded operators on the separable infinite-dimensional Hilbert space cannot be mapped to an 
  AW*-algebra in such a way that each unital commutative C*-subalgebra $C(X)$ factors normally
  through $\linfty(X)$. Consequently, there is no faithful functor discretizing C*-algebras to 
  AW*-algebras, including von Neumann algebras, in this way.
\end{abstract}

\section{Introduction}

In operator algebra it is common practice to think of a C*-algebra as representing a noncommutative 
analogue of a topological space, and to think of a W*-algebra as representing a noncommutative
analogue of a measurable space. 
What would it mean to make precise the notion of a C*-algebra $A$ as a `noncommutative ring of
continuous functions'? The present article explores the idea that one should first embed $A$ in an
appropriate noncommutative algebra of `bounded functions on the underlying quantum set of the
spectrum of $A$', just like any topological space embeds in a discrete one~\cite{akemann, gileskummer}.
It is tempting to demand that such a `noncommutative function ring' be an atomic W*-algebra, but
we work more generally under the mere assumption that they be AW*-algebras. 

Write $\Cstar$ for the category of unital C*-algebras with unital $*$-homomorphisms, and
$\AWstar$ for the category of AW*-algebras with unital $*$-homomorphisms
whose restriction to the projection lattices preserve arbitrary least upper bounds.\footnote{See~\cite[Lemma~2.2]{heunenreyes:activelattice} for further characterizations of these 
morphisms.} The discussion above leads naturally to the following notion, in keeping with the programme 
of taking commutative subalgebras seriously~\cite{heunen:faces,reyes:obstructing,bergheunen:extending,reyes:sheaves,vdbergheunen:colim}, that has recently been successful~\cite{heunenlandsmanspitters:topos,hamhalter:ordered,heunenreyes:activelattice,hamhalter:dye}.

\begin{definition*}
	A \emph{discretization} of a unital C*-algebra $A$ is a unital $*$-homomorphism $\phi \colon A \to M$ to an AW*-algebra $M$ whose restriction to each commutative unital C*-subalgebra $C \cong C(X)$ factors through the natural inclusion $C(X) \to \linfty(X)$ via a morphism $\linfty(X) \dashrightarrow M$ in $\AWstar$, so that the following diagram commutes.
	\[\begin{tikzpicture}[xscale=3,yscale=1]
	  \node (tl) at (0,1) {$A$};
	  \node (tr) at (1,1) {$M$};
	  \node (bl) at (0,0) {$C(X)$};
	  \node (br) at (1,0) {$\linfty(X)$};
	  \draw[->] (tl) to node[above] {$\phi$} (tr);
	  \draw[right hook->] (bl) to (br);
	  \draw[left hook->] (bl) to (tl);
	  \draw[->,dashed] (br) to (tr); %node[right] {$\AWstar$-morphism} (tr);
	\end{tikzpicture}\]
\end{definition*}

This short note proves that this construction degenerates in prototypical cases.

\begin{theorem*}
  If $\phi \colon B(H) \to M$ is a discretization for a separable infinite-dimensional Hilbert space $H$, then $M=0$.
\end{theorem*}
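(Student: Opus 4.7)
The plan is to assume $M \neq 0$ and derive a contradiction via a Kochen--Specker-type obstruction on the projection lattice of $B(H)$, leveraging the normality of the factorizations.

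\emph{Setup.} Pick any orthonormal basis $\{u_n\}_{n\in\N}$ of $H$ and let $D_u \cong \linfty(\N) \cong C(\beta\N)$ be the associated atomic masa of $B(H)$. The discretization hypothesis furnishes a normal $*$-homomorphism $\psi_u \colon \linfty(\beta\N) \to M$ with $\psi_u|_{C(\beta\N)} = \phi|_{D_u}$. Writing $p_n$ for the rank-one projection onto $\C u_n$, normality yields pairwise orthogonal projections $e^u_q := \psi_u(\chi_{\{q\}}) \in \Proj(M)$ for $q \in \beta\N$, with $\bigvee_q e^u_q = 1$ and $\phi(p_n) = e^u_n$ for $n \in \N$.

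\emph{Extracting a coloring.} Assuming $M \neq 0$, the commutative AW*-subalgebra $\psi_u(\linfty(\beta\N)) \subseteq M$ is nonzero and thus admits a character; composing with $\psi_u$ gives a character $\eta_u$ of $\linfty(\beta\N)$, equivalently an ultrafilter on $\beta\N$, which is dispersion-free on projections. Hence $v_u(n) := \eta_u(\phi(p_n)) \in \{0,1\}$ defines a $\{0,1\}$-valued assignment on the basis projections with $\sum_n v_u(n) \leq 1$, equality holding precisely when the ultrafilter is principal at some $n \in \N$.

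\emph{Kochen--Specker contradiction.} Iterating this construction across every orthonormal basis of $H$ and exploiting the global nature of the single $*$-homomorphism $\phi$, I would assemble the per-basis colorings into a function $v \colon \Proj(B(H)) \to \{0,1\}$ that is orthogonally additive with $v(1) = 1$. The infinite-dimensional Kochen--Specker theorem---a consequence of Gleason's theorem, valid since $\dim H \geq 3$---forbids any such function, yielding the desired contradiction and forcing $M = 0$. The main obstacle is the coherence step: characters $\eta_u$ for distinct bases are a priori chosen independently, and forcing agreement on shared rank-one projections $\phi(p) \in M$ essentially demands producing a single ``dispersion-free'' state on $M$ that is multiplicative on every commutative image $\psi_u(\linfty(\beta\N))$ simultaneously. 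I expect this to be arranged either by carefully exploiting the unitary transitivity between atomic masas (so that relations enforced by $\phi$ transport colorings consistently) or by a more direct argument proving in advance that the discretization already forces $\phi(p_v) \in \{0, 1_M\}$ for each rank-one $p_v$.
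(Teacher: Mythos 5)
Your central step---assembling the per-basis characters $\eta_u$ into a single orthogonally additive $\{0,1\}$-valued function on $\Proj(B(H))$---is exactly the part you leave open, and it is not a technical loose end but the whole difficulty, so as it stands this is a plan rather than a proof. Moreover, there is a structural reason this route cannot be completed without some essentially infinite-dimensional input that you have not identified: every ingredient you actually use (normal factorization of an atomic masa, existence of a character on a nonzero commutative AW*-subalgebra of $M$) is equally available for a discretization of $M_n(\C)$ with $n \geq 3$, where the identity map \emph{is} a discretization because every commutative C*-subalgebra is already of the form $\linfty(X)$ with $X$ finite. Since Kochen--Specker forbids colorings of $\Proj(M_n(\C))$ for $n \geq 3$, any coherence mechanism of the kind you hope for would ``prove'' that $M_n(\C)$ has no discretization, which is false; the paper itself notes that, for precisely this reason, its obstruction is independent of the Kochen--Specker theorem. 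In particular the characters $\eta_u$ for distinct bases really are independent choices with no induced compatibility, and your fallback hope that a discretization forces $\phi(p) \in \{0, 1_M\}$ for rank-one $p$ is refuted by the finite-dimensional model $\phi = \mathrm{id}$.

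The paper's actual argument is quantitative rather than lattice-theoretic, and its genuinely infinite-dimensional inputs are different from yours: it plays the discrete masa $D$ (the Fourier diagonal of $H = L^2[0,1]$) against the \emph{continuous} masa $C = \Linfty[0,1]$, which has no atoms and so cannot exist in finite dimensions. The solution of Kadison--Singer (paving) shows that the unital completely positive compressions $f \mapsto q_y \phi(f) q_y$, which on $D$ restrict to a pure state times $q_y$, are uniquely determined and equal $\psi_0(E(f))\, q_y$; evaluating on $C$ gives $q_y \phi(\chi_{[0,t]}) q_y = t\, q_y$. Writing each $\chi_{[0,t]}$ as a sum of atoms of $\linfty(X)$ (where $C \cong C(X)$) and cutting $[0,1]$ into $n$ equal pieces yields $q_y p_x q_y \leq \frac{1}{n} q_y$ for every atom image $p_x$, hence $q_y p_x q_y = 0$, hence $p_x q_y = 0$; normality of the factorizations ($\sum_y q_y = 1 = \sum_x p_x$) then forces every $p_x = 0$ and so $M = 0$. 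If you want to salvage your approach, you would need a replacement for these ingredients---an atomless subalgebra and a uniqueness-of-extension (paving) theorem---rather than a cleverer bookkeeping of characters.
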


For W*-algebras $M$, this obstruction concretely means that 
$B(H)$ has no nontrivial representation on a Hilbert space such that every (maximal) commutative
$*$-subalgebra has a basis of simultaneous eigenvectors.

Consequently, discretization cannot be made into a faithful functor. 
%\footnote{Note that this is a very different obstruction than~\cite{adamekherrlickrosickytholen:injectivehulls}.}

\begin{corollary*}
  Let a functor $F \colon \Cstar \to \AWstar$ have natural unital $*$-homo\-morphisms
   $\eta_A \colon A \to F(A)$.
  Suppose there are isomorphisms $F(C(X)) \cong \linfty(X)$ for each compact Hausdorff space $X$ that
  turn $\eta_{C(X)}$ into the inclusion $C(X) \to \linfty(X)$.
  If a unital C*-algebra $A$ has a unital $*$-homomorphism $\alpha \colon B(K) \to A$ for an 
  infinite-dimensional Hilbert space $K$, then $F(A)=0$.
\end{corollary*}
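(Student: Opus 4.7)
The plan is to reduce the corollary directly to the theorem by exhibiting, from the functor $F$ and the hypothesized $\alpha$, a discretization of $B(H)$ whose codomain is $F(A)$ for some separable infinite-dimensional $H$. The theorem will then force $F(A)=0$.

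To carry this out I would first pass from the possibly non-separable $K$ to a separable Hilbert space. Take $H=\ell^2(\N)$; since $K$ is infinite-dimensional, it decomposes as $K\cong\bigoplus_{s\in S}H_s$ with each $H_s\cong H$, and the corresponding amplification $T\mapsto\bigoplus_s T$ is a unital $*$-homomorphism $\beta\colon B(H)\to B(K)$. Set
\[
\phi \;:=\; \eta_A\circ\alpha\circ\beta\colon B(H)\longrightarrow F(A).
\]
Given any commutative unital C*-subalgebra $C\hookrightarrow B(H)$ with Gelfand spectrum $X$, let $\iota\colon C\to A$ denote the composite $C\hookrightarrow B(H)\xrightarrow{\beta}B(K)\xrightarrow{\alpha}A$. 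Naturality of $\eta$ at $\iota$ yields $\phi|_C = \eta_A\circ\iota = F(\iota)\circ\eta_C$, and by hypothesis the isomorphism $F(C)\cong\linfty(X)$ identifies $\eta_C$ with the canonical inclusion $C(X)\hookrightarrow\linfty(X)$. Under this identification, $F(\iota)$ becomes a morphism $\linfty(X)\to F(A)$ in $\AWstar$ (it lies in $\AWstar$ because $F$ does), and it makes the required square commute.

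Hence $\phi$ satisfies the definition of a discretization of $B(H)$, and the theorem immediately delivers $F(A)=0$. I do not foresee a serious obstacle here: the amplification $\beta$ is a standard construction, and everything else is a diagram chase driven by naturality of $\eta$ together with the assumed identification $F(C(X))\cong\linfty(X)$. The one cosmetic point to keep an eye on is that Gelfand duality is applied to an abstract commutative subalgebra $C$ rather than to $C(X)$ directly, but this is absorbed by the naturality square for $\eta$ applied to any chosen isomorphism $C\cong C(X)$.
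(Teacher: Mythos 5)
Your proposal is correct and is essentially the paper's argument: both reduce to the Theorem by composing a unital embedding of $B(H)$ into $B(K)$ (your direct-sum amplification is the paper's $a \mapsto a \otimes 1$ under $K \cong H \otimes K$) with $\alpha$ and $\eta_A$, and then checking that the composite is a discretization using naturality of $\eta$ together with the hypothesis identifying $\eta_{C(X)}$ with $C(X) \hookrightarrow \linfty(X)$. The only difference is organizational: the paper first observes that $\eta_A$ itself discretizes $A$ and then invokes its stability lemma (Lemma~\ref{lem:stable}), whereas you inline that diagram chase by applying naturality of $\eta$ directly to the composite $C \to A$, which is a harmless streamlining.
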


As the proof of the Theorem relies on the use of annihilating projections and on the Archimedean
property of the partial ordering of positive elements in the discretizing AW*-algebra $M$, 
it is intriguing to note that this does not rule out faithful functors $F$ as above from $\Cstar$ to the
category $\Cstar$ or to the category of Baer $*$-rings with 
$*$-homomorphisms that restrict to complete orthomorphisms on projection lattices.
A rather different approach to the problem of extending the embeddings $C(X) \hookrightarrow \linfty(X)$
to noncommutative C*-algebras has recently appeared in~\cite{kornell:vstar}.
We also remark that since the identity functor discretizes all finite-dimensional C*-algebras, this truly
infinite-dimensional obstruction is independent of the Kochen--Specker theorem, a key ingredient
in some previous spectral obstruction results~\cite{reyes:obstructing, bergheunen:extending}.

The rest of this note proves the Theorem and its Corollary.

\section{Proof}

\begin{notation*}
  Fix a separable infinite-dimensional Hilbert space $H=L^2[0,1]$, and consider its algebra $B(H)$ of bounded operators. Write $D$ for the discrete maximal abelian \mbox{$*$-subalgebra} generated as a W*-algebra by the projections $q_n$ onto the Fourier basis vectors $e_n = \exp(2\pi i n-)$ for $n \in \Z$. There is a canonical conditional expectation $E \colon B(H) \to D$ that sends $f \in B(H)$ to its diagonal part $\sum q_n f q_n$. 
\end{notation*}

The main results rely upon the following mild strengthening of the recent solution of the Kadison--Singer problem~\cite{marcusspielmansrivastava:kadisonsinger}. 

\begin{lemma}\label{lem:kadisonsinger}
  Let $A$ be any unital C*-algebra, and $\psi_0 \colon D \to \C$ a pure state of $D$.
  The map $\psi_0 \cdot 1_A \colon D \to A$ given by $f \mapsto \psi_0(f) \cdot 1_A$ extends uniquely to a unital completely positive map $\psi \colon B(H) \to A$ given by $f \mapsto \psi_0(E(f)) \cdot 1_A$.
\end{lemma}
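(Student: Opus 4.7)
The plan is to prove existence and uniqueness separately, handling uniqueness by dualizing: post-composing with states of $A$ reduces the problem to the scalar case $A=\C$, to which the Kadison--Singer theorem (as established by Marcus--Spielman--Srivastava) applies directly.

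For existence, I would simply observe that the stated formula $f \mapsto \psi_0(E(f)) \cdot 1_A$ is a composition of unital completely positive maps: the conditional expectation $E \colon B(H) \to D$ is UCP, the pure state $\psi_0 \colon D \to \C$ is UCP, and the unital $*$-homomorphism $\C \to A$ sending $z \mapsto z \cdot 1_A$ is UCP. Since $E$ restricts to the identity on $D$, this composite manifestly agrees with $\psi_0 \cdot 1_A$ on $D$.

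For uniqueness, let $\psi \colon B(H) \to A$ be any UCP extension of $\psi_0 \cdot 1_A$. Given any state $\rho$ of $A$, the composite $\rho \circ \psi \colon B(H) \to \C$ is a state (composition of positive unital maps). On $g \in D$ it satisfies $\rho(\psi(g)) = \rho(\psi_0(g) \cdot 1_A) = \psi_0(g)$, so $\rho \circ \psi$ is a state of $B(H)$ extending the pure state $\psi_0$ of the MASA $D$. The Kadison--Singer theorem guarantees that this extension is unique; since $\psi_0 \circ E$ is one such extension, we must have $\rho \circ \psi = \psi_0 \circ E$. Hence for every $f \in B(H)$ and every state $\rho$ of $A$,
\[
  \rho\bigl(\psi(f)\bigr) = \psi_0(E(f)) = \rho\bigl(\psi_0(E(f)) \cdot 1_A\bigr).
\]
Since states of a C*-algebra separate its points, this forces $\psi(f) = \psi_0(E(f)) \cdot 1_A$.

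There is no real obstacle beyond invoking Kadison--Singer; the only delicate conceptual step is recognizing that a pure state $\psi_0$ on the commutative algebra $D$ is automatically a character, so that $\psi_0 \cdot 1_A$ is a genuine $*$-homomorphism and its image lies in the center of $A$, which is what makes the scalar-valued reduction via arbitrary states $\rho$ of $A$ work cleanly.
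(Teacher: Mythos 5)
Your proof is correct, but it takes a genuinely different route from the paper. You treat the scalar Kadison--Singer theorem (unique state extension of a pure state of $D$ to $B(H)$) as a black box and reduce the operator-valued statement to it by post-composing with arbitrary states $\rho$ of $A$ and using that states separate points of a C*-algebra; note that this only uses positivity and unitality of $\psi$, not complete positivity. The paper instead inlines the Anderson-type reduction at the operator-valued level: it cites the paving conjecture itself (the literal Marcus--Spielman--Srivastava result), pavés a self-adjoint $f$ with $E(f)=0$ by projections $p_i \in D$, uses that $\psi|_D = \psi_0$ is multiplicative to get $\psi(p_1)=1$, $\psi(p_i)=0$ for $i>1$, and then the Schwarz inequality for $2$-positive maps to conclude $\|\psi(f)\| \leq \eps\|f\|$. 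Your route is shorter and arguably more conceptual, at the price of relying on the (standard, but separate) derivation of unique state extension from paving; the paper's route is self-contained modulo the paving theorem, makes visible that $2$-positivity suffices, and is what allows the later Remark to weaken the input to paving of $C(\T)$ only. One small quibble with your commentary: the fact that $\psi_0$ is a character of $D$ is not actually what makes your reduction work --- you only need that $\psi_0 \cdot 1_A$ is scalar-valued and that $\rho$ is unital, so $\rho \circ \psi$ restricts to $\psi_0$ on $D$; multiplicativity of $\psi_0$ is, however, genuinely used in the paper's paving argument.
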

\begin{proof}
  We employ a standard reduction of the unique extension problem to Anderson's paving conjecture, as outlined, for instance, in~\cite{paulsenraghupathi:paving}.

  The extension $\psi_0 \circ E$ is well known to be a pure state, proving existence. For uniqueness, let $\psi \colon B(H) \to A$ be any unital completely positive map extending $\psi_0 \cdot 1_A$.
  It suffices to show that $\psi = \psi \circ E$, as then $E(f) \in D$ for $f \in B(H)$ implies $\psi(f) = \psi(E(f)) = \psi_0(E(f)) \cdot 1_A$ as desired.
  As $f$ is a linear combination of two self-adjoint elements, we may further assume that $f = f^* \in B(H)$. 
  Replacing $f$ with $f - E(f)$, we reduce to showing $\psi(f)=0$ when $f = f^*$ and $E(f) = 0$.
  To this end, let $\eps > 0$. 
  By Anderson's paving conjecture, established in~\cite[1.3]{marcusspielmansrivastava:kadisonsinger}, there exist projections $p_1, \dots, p_n \in D$ with $\sum p_i = 1$ and $\|p_i f p_i\| \leq \eps \|g\|$ for all $i$.
  As $\psi|_D = \psi_0$ is a pure state, up to reordering indices we have $\psi(p_1) = 1$ and 
  $\psi(p_i) = 0$ for $i > 1$.

  By the Schwarz inequality for 2-positive maps~\cite[Exercise~3.4]{paulsen:completelybounded}, for all 
  $i > 1$ we have $\|\psi(p_i f) \|^2 \leq \|\psi(p_i p_i^*)\| \cdot \| \psi(f^* f)\| = 0$ since $\psi(p_i p_i^*) 
  = \psi(p_i) = 0$. Thus $\psi(p_i f) = 0$ for all $i > 1$, making $\psi(f) = \sum_{i=1}^n \psi(p_i f) = \psi(p_1 f)$. 
  A symmetric argument replacing $f$ with $p_1 f$ yields $\psi(f) =  \psi(p_1f) = \psi(p_1 f p_1)$. 
  Unitality of $\psi$ furthermore gives $\|\psi\|=1$~\cite[Corollary~2.8]{paulsen:completelybounded}, so that
  \[
    \|\psi(f)\| = \|\psi(p_1 f p_1)\| \leq \| p_1 f p_1 \| \leq \eps \|f\|.
  \]
  As $\eps$ was arbitrary, we deduce that $\psi(f) = 0$ as desired.
\end{proof}

Note that Lemma~\ref{lem:kadisonsinger} still holds with $\psi$ merely 2-positive.
Next we consider the continuous maximal abelian *-subalgebra $C=\Linfty[0,1]$ of $B(H)$.

\begin{lemma}\label{lem:fourier}
  Let $\psi \colon B(H) \to \mathbb{C}$ be the unique extension of a pure state of $D$.
  The restriction of $\psi$ to $C$ is the state given by integration (against the Lebesgue measure).
\end{lemma}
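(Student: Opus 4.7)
The plan is to combine Lemma~\ref{lem:kadisonsinger} with an explicit diagonal computation, exploiting the fact that the Fourier basis consists of functions of constant modulus $1$.

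First I would specialize Lemma~\ref{lem:kadisonsinger} to $A = \C$: the unique extension $\psi$ of the pure state $\psi_0$ of $D$ is given by $\psi = \psi_0 \circ E$. So the restriction of $\psi$ to $C$ factors through the conditional expectation $E|_C \colon C \to D$, and it suffices to compute $E(M_g)$ for a multiplication operator $M_g$ with $g \in L^\infty[0,1]$ and show it is a scalar multiple of the identity equal to $\int_0^1 g \, dx$.

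For the diagonal computation, the $n$th entry of $M_g$ in the Fourier basis is
\[
  \inprod{M_g e_n}{e_n} = \int_0^1 g(x) \, \lvert e_n(x) \rvert^2 \, dx = \int_0^1 g(x) \, dx,
\]
because $\lvert e_n(x) \rvert = \lvert e^{2\pi i n x}\rvert = 1$ for every $n \in \Z$ and every $x \in [0,1]$. Hence every diagonal entry is the same integral, and $E(M_g) = \sum_n q_n M_g q_n = \bigl(\int_0^1 g \, dx\bigr) \cdot 1_D$.

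Finally I would apply $\psi_0$ and use that it is a unital state: since $\psi_0(1_D)=1$, we conclude
\[
  \psi(M_g) = \psi_0(E(M_g)) = \psi_0\!\left( \Bigl(\int_0^1 g(x) \, dx\Bigr) 1_D \right) = \int_0^1 g(x) \, dx,
\]
which is exactly integration against Lebesgue measure. There is no real obstacle here; the entire content is the observation that the Fourier basis is made of unimodular functions, which makes $E|_C$ collapse every $g \in C$ to the constant function $\int g$ regardless of which pure state $\psi_0$ of $D$ we extended.
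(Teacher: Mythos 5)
Your proof is correct and follows essentially the same route as the paper: invoke Lemma~\ref{lem:kadisonsinger} to get $\psi = \psi_0 \circ E$, then compute the diagonal entries $\inprod{f e_n}{e_n} = \int_0^1 f(x)\,\mathrm{d}x$ using that the Fourier basis vectors are unimodular, so $E$ sends each $f \in C$ to the constant $\int_0^1 f(x)\,\mathrm{d}x$ and any unital state of $D$ returns that integral. Nothing to add.
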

\begin{proof}
  Each $f \in C$ has diagonal part $E(f) = \int_0^1 f(x)\, \mathrm{d}x$ because
  \begin{align*}
    \inprod{f e_n}{e_n} 
    & = \inprod{f \cdot \exp(2 \pi in-)}{\exp(2 \pi in-)} \\
    & = \int_0^1 f(x) \cdot e^{2 \pi inx} \cdot \overline{e^{2 \pi inx}} \, \mathrm{d}x \\
    % & = \int_0^1 f(x) |e^{2 \pi inx}|^2 \, \mathrm{d}x \\
    & = \int_0^1 f(x) \, \mathrm{d}x.
  \end{align*}
  Because we assumed that $\psi$ is a pure state of $D$, we have $\psi = \psi \circ E$ as in Lemma~\ref{lem:kadisonsinger}.
  Hence $\psi(f) = \psi(E(f)) = \psi( \int_0^1 f(x)\,\mathrm{d}x) = \int_0^1 f(x)\,\mathrm{d}x$.
\end{proof}

To prove the Theorem, recall that for an orthogonal set of projections $\{p_i\}$ in an AW*-algebra, 
$\sum p_i$ denotes their least upper bound in the lattice of projections.

\begin{proof}[Proof of Theorem]
  Write $C \cong C(X)$ and $D \cong C(Y)$ for compact Hausdorff spaces $X$ and $Y$. The
  discretization $\phi \colon B(H) \to M$ is accompanied by the following commutative
  diagram, where $\alpha$ and $\beta$ are morphisms in $\AWstar$.
  \[\begin{tikzpicture}[xscale=3]
    \node at (-.56,1) {$C = \Linfty[0,1] \cong$};
    \node at (-.52,-1) {$D = \linfty(\Z) \cong$};
    \node (tl) at (0,1) {$C(X)$};
    \node (l) at (0,0) {$B(H)$};
    \node (bl) at (0,-1) {$C(Y)$};
    \node (tr) at (1,1) {$\linfty(X)$};
    \node (r) at (1,0) {$M$};
%    \node at (1.3,-.025) {$ \subseteq B(K)$};
    \node (br) at (1,-1) {$\linfty(Y)$};
    \draw[->] (tl) to (tr);
    \draw[->] (bl) to (br);
    \draw[->] (l) to node[above] {$\phi$} (r);
    \draw[right hook->] (tl) to (l);
    \draw[left hook->] (bl) to (l);
    \draw[->, dashed] (tr) to node[right] {$\alpha$} (r);
    \draw[->, dashed] (br) to node[right] {$\beta$} (r);
  \end{tikzpicture}\]
  The atomic projections $\delta_x \in \linfty(X)$ for $x \in X$ and $\delta_y \in \linfty(Y)$ for 
  $y \in Y$ have respective images $p_x = \alpha(\delta_x) \in M$ and $q_y = \beta(\delta_y) \in M$.
  For each $y$, the map $\psi \colon B(H) \to q_y M q_y$ given by $\psi(f) = q_y \phi(f) q_y$ is
  completely positive and unital (where $q_y$ is the unit of $q_y M q_y$). Its restriction to 
  $D$ is of the following form, where we consider $f \in D$ as an element of
  the function algebra $C(Y) \subseteq \linfty(Y)$: 
  \[
  \psi(f) = q_y \phi(f) q_y = \beta(\delta_y f \delta_y) = \beta(f(y) \delta_y) = f(y) q_y.
  \]
  Thus there is a pure state $\psi_0$ on $D$ with $\psi|_D = \psi_0 \cdot q_y$. It follows from
  Lemma~\ref{lem:kadisonsinger} that $\psi = (\psi_0 \circ E) \cdot q_y$.
  For $t \in [0,1]$, write $e_t = \phi(\chi_{[0,t]})$ for the image of the characteristic function 
  $\chi_{[0,t]} \in C$. Lemma~\ref{lem:fourier} implies $\psi(\chi_{[0,t]}) = \left( \int_0^1 \chi_{[0,t]}(x)\,\mathrm{d}x \right) \cdot q_y = t q_y$, so
  \[
    q_y e_t q_y = q_y \phi(\chi_{[0,t]}) q_y = \psi(\chi_{[0,t]}) = t q_y
  \] 
  for all $y \in Y$ and all $t \in [0,1]$.

  Considering each projection $\chi_{[0,t]}$ as an element of $C(X)$, fix clopen sets $K_t \subseteq X$
  such that $\chi_{[0,t]} = \sum_{x \in K_t} \delta_x$.
  Then $e_t = \phi(\chi_{[0,t]}) = \sum_{x \in K_t} p_x$ in $M$. Fix $n \in \N$, and set 
  $J_i = K_{i/n} \setminus K_{(i-1)/n} \subseteq Y$. Note that $K_1 = X$, so that these $J_i$ partition 
  $X$ into a
  disjoint union of $n$ clopen sets. By construction, 
  \[
    \sum_{x \in J_i} p_x 
    \;=\; \sum_{x \in K_{i/n}} p_x - \sum_{x \in K_{(i-1)/n}} p_x \\
    \;=\; e_{i/n} - e_{(i-1)/n}.
  \]
  Now fix $x \in X$. Then $x \in J_i$ for some $i$, and $p_x \leq e_{i/n} - e_{(i-1)/n}$ as above.
  Thus
  \[
    q_y p_x q_y 
    \;\leq\; q_y (e_{i/n} - e_{(i-1)/n}) q_y \\
    % \;=\; q_y e_{i/n} q_y - q_y e_{(i-1)/n} q_y \\
    \;=\; \tfrac{i}{n} \, q_y - \tfrac{i-1}{n} \, q_y \\
    \;=\; \tfrac{1}{n} \, q_y.
  \]
  As $n$ was arbitrary, we find that $q_y p_x q_y = 0$.
  Now $(p_x q_y)^* (p_x q_y) = q_y p_x q_y = 0$ gives $p_x q_y = 0$ for all $y \in Y$. Thus $p_x$ is
  orthogonal to $\sum q_y = 1$ in $M$, whence $p_x = 0$ for all $x \in X$. It follows that 
  $1 = \sum p_x = 0$ in $M$, and so $M = 0$.
\end{proof}

\begin{remark*}
We thank an anonymous referee for noticing that our arguments prevail without the full force of Kadison--Singer. This may be done as follows. Identifying the algebra 
$C(\T)$ of continuous functions on the unit circle $\T$ with the subalgebra $\{f \mid f(0) = f(1)\}
\subseteq C[0,1]$, it is known that $C(\T)$ satisfies paving with respect to $D$.
(Indeed, the algebra of Fourier polynomials---or more generally, the Wiener algebra $A(\T)$---is a 
dense subalgebra of $C(\T)$ and lies in the algebra $M_0 \subseteq B(H)$ of operators that
are $l_1$-bounded in the sense of Tanbay~\cite{tanbay:extensions} with respect to the Fourier basis 
$\{e_n \mid n \in \Z\}$. Thus $C(\T)$ lies in the norm closure $M$ of $M_0$, 
and~\cite{tanbay:extensions} shows that all operators in $M$ can be paved with respect to $D$.) 
An argument as in Lemma~\ref{lem:kadisonsinger} shows that the completely positive map $\psi$
in the proof of the Theorem is uniquely determined on $C(\T)$, and a computation as in
Lemma~\ref{lem:fourier} shows that this extension is the state corresponding to the arclength measure
on $\T$. The Theorem may now be proved in essentially the same manner, replacing
 $C$ with $C(\T)$.
\end{remark*}

The proof of the Corollary uses stability of discretizations in the following sense.

\begin{lemma}\label{lem:stable}
If $\phi \colon B \to M$ is a discretization, $\alpha \colon A \to B$ is a morphism in $\Cstar$, and $\beta \colon M \to N$ is a morphism in $\AWstar$, then $\beta \circ \phi \circ \alpha$ discretizes $A$. 
\end{lemma}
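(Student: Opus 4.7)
The plan is to verify the factorization condition on each commutative unital C*-subalgebra $C' \subseteq A$ by transporting it across $\alpha$ to $B$, invoking the discretization hypothesis on $\phi$ there, and then post-composing with $\beta$. That $\beta \circ \phi \circ \alpha \colon A \to N$ is a unital $*$-homomorphism to an AW*-algebra is immediate from the fact that all three constituent maps have this property, so only the factorization condition requires attention.

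Fix a commutative unital C*-subalgebra $C' \cong C(X')$ of $A$. First I would form the image $\alpha(C') \subseteq B$, which is itself a commutative unital C*-subalgebra of $B$, necessarily of the form $C(X'')$ for some compact Hausdorff space $X''$. Regarded as a surjection onto its image, $\alpha|_{C'} \colon C(X') \twoheadrightarrow C(X'')$ is a surjective unital $*$-homomorphism, and by Gelfand duality takes the form $f \mapsto f \circ j$ for a continuous injection $j \colon X'' \hookrightarrow X'$.

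The key observation is that this same $j$ induces a unital $*$-homomorphism $j^* \colon \linfty(X') \to \linfty(X'')$ by pointwise restriction of bounded functions, and fits into a commutative square with the canonical inclusions $C(X') \hookrightarrow \linfty(X')$ and $C(X'') \hookrightarrow \linfty(X'')$. The one point requiring genuine verification --- and the main (minor) obstacle --- is that $j^*$ itself is a morphism in $\AWstar$. Since projections in $\linfty(X')$ are characteristic functions $\chi_S$ with $j^*(\chi_S) = \chi_{j^{-1}(S)}$, and arbitrary suprema of such projections are characteristic functions of unions, preservation of arbitrary joins of projections reduces to the fact that preimages commute with arbitrary unions.

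Finally, applying the discretization hypothesis for $\phi$ to the commutative unital C*-subalgebra $\alpha(C') \cong C(X'')$ of $B$ supplies a morphism $\gamma \colon \linfty(X'') \to M$ in $\AWstar$ through which $\phi|_{\alpha(C')}$ factors. Then $\beta \circ \gamma \circ j^* \colon \linfty(X') \to N$ is a composition of morphisms in $\AWstar$, and chasing the two commutative squares built above shows that it realizes the required factorization of $\beta \circ \phi \circ \alpha|_{C'}$ through the canonical inclusion $C(X') \hookrightarrow \linfty(X')$. The remaining bookkeeping is routine.
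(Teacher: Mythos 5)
Your proposal is correct and follows essentially the same route as the paper: push the subalgebra $C(X')$ forward to $\alpha(C')\cong C(X'')$, apply Gelfand duality to get the continuous map $j$, factor through $\linfty(X'')\to M$ from the discretization of $\phi$, and post-compose with $\beta$, using the naturality square $C(X')\hookrightarrow\linfty(X')\to\linfty(X'')$. Your explicit verification that $j^*\colon\linfty(X')\to\linfty(X'')$ preserves arbitrary suprema of projections (via preimages commuting with unions) is a point the paper leaves implicit, and it is a welcome addition.
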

\begin{proof}
  If $C(X) \subseteq A$ is a commutative C*-subalgebra, so is $C(Y) \cong \alpha[C(X)] \subseteq B$, making the top squares of the following diagram commute (where $\widehat{\alpha} \colon Y \to X$ is the  continuous function corresponding to $\alpha$ via Gelfand duality).
  \[\begin{tikzpicture}[xscale=3,yscale=1]
    \node (A) at (0,1) {$A$};
    \node (B) at (1,1) {$B$};
    \node (M) at (2,1) {$M$};
    \node (N) at (3,1) {$N$};
    \node (C) at (0,0) {$C(X)$};
    \node (D) at (1,0) {$C(Y)$};
    \node (l1) at (2,0) {$\linfty(Y)$};
    \node (l2) at (1,-1) {$\linfty(X)$};
    \draw[->] (A) to node[above] {$\alpha$} (B);
    \draw[->] (B) to node[above] {$\phi$} (M);
    \draw[->, dashed] (M) to node[above] {$\beta$} (N);
    \draw[left hook->] (C) to (A);
    \draw[left hook->] (D) to (B);
    \draw[->] (C) to node[above] {$C(\widehat{\alpha})$} (D);
    \draw[right hook->] (D) to node[above] {$\eta_{C(Y)}$} (l1);
    \draw[->, dashed] (l1) to (M);
    \draw[right hook->] (C.-60) to node[below=1mm] {$\eta_{C(X)}$} (l2);
    \draw[->, dashed] (l2) to node[below=1mm] {$\linfty(\widehat{\alpha})$} (l1);
  \end{tikzpicture}\]
  The bottom triangle commutes by naturality of $\eta$. %: both paths send $g \in C$ to $g \circ \Spec(f)$.
  As all dashed arrows are morphisms in $\AWstar$, so is their composite.
\end{proof}

\begin{proof}[Proof of Corollary]
  Let $\gamma \colon C(X) \rightarrowtail A$ be the embedding of a commutative C*-subalgebra.
  The hypotheses ensure that the following diagram commutes, where $F(\gamma)$ is a morphism in $\AWstar$, making $\eta_A \colon A \to F(A)$ a discretization.
  \[\begin{tikzpicture}[xscale=3,yscale=1.25]
    \node (A) at (0,1) {$A$};
    \node (FA) at (1.5,1) {$F(A)$};
    \node (C) at (0,0) {$C(X)$};
    \node (l) at (1,0) {$\linfty(X) \cong$};
    \node(FC) at (1.5,0) {$F(C(X))$};
    \draw[->] (A) to node[above] {$\eta_A$} (FA);
    \draw[->] (FC) to node[right] {$F(\gamma)$} (FA);
    \draw[>->] (C) to node[left] {$\gamma$} (A);
    \draw[right hook->] (C) to (l);
  \end{tikzpicture}\]

  Since $K$ is infinite-dimensional, it is unitarily isomorphic to $H \otimes K$, so $a \mapsto a \otimes 1$ is 
  a unital $*$-homomorphism $\iota \colon B(H) \to B(H) \otimes B(K) \cong B(K)$.
  Lemma~\ref{lem:stable} implies $\eta_A \circ \alpha \circ \iota \colon B(H) \to F(A)$ is a 
  discretization, and the Theorem gives $F(A)=0$.
\end{proof}

\bibliographystyle{plain}
\bibliography{nodiscretization-arxiv-v4}

\end{document}